\renewcommand{\leq}{\leqslant}
\renewcommand{\geq}{\geqslant}
\definecolor{citation}{rgb}{0.2,0.5,0.2}
\definecolor{formula}{rgb}{0.1,0.2,0.5}
\definecolor{url}{rgb}{0,0.2,0.7}
\newtheoremstyle%
   {olivier}%       %1 Nom
   {1.5ex}%          %2 Espace avant
   {1.5ex}%          %3 Espace après
   {\sl}%      %4 forme des caractères
   {11.5pt}%          %5 indentation
   {\bf\sc}   %6 Style de l'entête
   {.~---}%             %7
   {1ex}%      %8 Retour à la ligne après le titre
   {}%             %9 Comme dans plain ?
\newtheoremstyle%
   {oliv0}%       %1 Nom
   {1.5ex}%          %2 Espace avant
   {1.5ex}%          %3 Espace après
   {\sl}%      %4 forme des caractères
   {11.5pt}%          %5 indentation
   {\bf\sc}   %6 Style de l'entête
   {~---}%             %7
   {1ex}%      %8 Retour à la ligne après le titre
   {}%             %9 Comme dans plain ?
\newtheoremstyle%
   {clovis}%           %1 Nom
   {1.5ex}%            %2 Espace avant
   {1.5ex}%            %3 Espace après
   {\normalfont}%      %4 forme des caractères
   {11.5pt}%             %5 indentation
   {\sl}               %6 Style de l'entête
   {.~---}%            %7  %\,---\kern.2em\ignorespaces
   {1ex}%              %8 Retour à la ligne après le titre
   {}%                 %9 Comme dans plain ?
\newtheorem{theo}{Theorem}[section]
\newtheorem{lemma}[theo]{Lemma}
\theoremstyle{definition}
\newtheorem{de}[theo]{Definition}
\theoremstyle{fact}
\theoremstyle{remark}
\newtheorem{remark}[theo]{Remark}
\numberwithin{equation}{section}
\def\R {\mathbb{R}}
\def\N {\mathbb{N}}
\def\eps{\varepsilon}
\newlength{\defbaselineskip}
\newcommand{\setlinespacing}[1]
           {\setlength{\baselineskip}{#1 \defbaselineskip}}
\begin{document}

\title[How smooth are restrictions of Besov functions?]{How smooth are restrictions of Besov functions?}

\author[Julien Brasseur]{Julien Brasseur}
%\address{Institut Marcel Micard, Université Libre de Saint-Fargeol, 03420 Villejaleix}
\email{julienbrasseur@wanadoo.fr}

\begin{abstract}
In a previous work, we showed that Besov spaces do not enjoy the restriction property unless $q\leq p$. Specifically, we proved that if $p<q$, then it is always possible to construct a function $f\in B_{p,q}^s(\mathbb{R}^N)$ such that $f(\cdot,y)\notin B_{p,q}^s(\mathbb{R}^d)$ for a.e. $y\in \mathbb{R}^{N-d}$, while this ``pathology" does not happen if $q\leq p$. We showed that the partial maps belong, in fact, to the Besov space of generalised smoothness $B_{p,q}^{(s,\Psi)}(\mathbb{R}^d)$ provided the function $\Psi$ satisfies a simple summability condition involving $p$ and $q$. This short note completes the picture by showing that this characterisation is sharp.
\end{abstract}

\subjclass[2010]{46E35}

\keywords{Besov spaces, restriction property, generalised smoothness.}

\maketitle

\tableofcontents

\section{Introduction}

In a previous work \cite{Brasseur} we showed, concomitantly to \cite{MRS}, that Besov spaces exhibit a striking property when looking at their partial maps. Precisely, letting $1\leq d<N$ and %$0<p,q\leq \infty$ and $s>\sigma_p$ where
\begin{align}
 0<p,q\leq \infty,\quad s>\sigma_p,\quad \sigma_p=N\hspace{0.1em}\bigg(\frac{1}{p}-1\bigg)_+, %\sigma_p=N\bigg(\frac{1}{p}-1\bigg)_+,
\label{sigmap}
\end{align}
we proved that, when $p<q$, there exists functions $f\in B_{p,q}^s(\R^N)$ such that
%$$ f(\cdot,y)\notin B_{p,q}^s(\R^d)\quad\text{for a.e. }y\in\R^{N-d}, $$
%while this does not happen when $q\leq p$ (see \cite[Fact 1.1, Theorem 1.3]{Brasseur}).
$$ f(\cdot,y)\notin B_{p,\infty}^s(\R^d) \, \text{ for a.e. }y\in\R^{N-d}, $$
(see \cite[Theorem 1.3]{Brasseur}) while, for $q\leq p$, we have
$$ f\in B_{p,q}^s(\R^N) \Longrightarrow f(\cdot,y)\in B_{p,q}^s(\R^d) \,\text{ for a.e. }y\in\R^{N-d}, $$
(see \cite[Proposition 5.1]{Brasseur} or \cite[Proposition 6.10]{MRS}).

This violation of what we called the ``restriction property" is quite surprising for at least two reasons: first, most function spaces of this type (such as Sobolev spaces, fractional Sobolev spaces, Triebel-Lizorkin spaces and so on) are known to satisfy the ``restriction property"; and, second, because the paramater $q$ usually plays little to no role in the properties of Besov spaces (we will come back to this later on, see Remark~\ref{RQ:q}).

%In this short note, we fully characterise this lack of regularity.
In \cite{Brasseur}, we showed that, when $p<q$, the partial maps belong to a scale of functions, intercalated between $B_{p,q}^{s-\eps}$ and $B_{p,q}^s$, known as ``Besov spaces of generalised smoothness'' and denoted by $B_{p,q}^{(s,\Psi)}$. Functions of this type still have $s$ as dominant smoothness but the latter is ``modulated" by a function $\Psi$ (typically of $\log$-type), which allows encoding more general kinds of smoothness. Specifically, we established that the partial maps of any $f\in B_{p,q}^s(\R^N)$ belong to $B_{p,p}^{(s,\Psi)}(\R^d)$, provided
\begin{align}
\bigg(\sum_{j=0}^\infty\Psi(2^{-j})^\varkappa\bigg)^\frac{1}{\varkappa}<\infty \,\, \text{ where } \,\, \frac{1}{\varkappa}=\frac{1}{p}-\frac{1}{q}. \label{cond:psi}
\end{align}
%where
%$$ \frac{1}{\varkappa}=\frac{1}{p}-\frac{1}{q}. $$
%with $\varkappa=p$ if $q=\infty$.
(For a discussion on the relevance of this scale in physical problems or spaces on fractals we refer to \cite{Brasseur}, where we also gave a brief historical account and references.)

Let us summarise the above more precisely:
\begin{theo}\label{TH:old}
Let $N\geq2$, $1\leq d<N$, $0<p<q\leq\infty$, $s>\sigma_p$ and let $\Psi$ be a slowly varying function satisfying \eqref{cond:psi}. Suppose that $f\in B_{p,q}^s(\R^N)$. Then,
$$ f(\cdot,y)\in B_{p,p}^{(s,\Psi)}(\R^d)\quad\text{for a.e. }y\in\R^{N-d}. $$
\end{theo}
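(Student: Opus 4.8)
The plan is to prove the stronger quantitative statement
$$\int_{\R^{N-d}}\big\|f(\cdot,y)\big\|_{B_{p,p}^{(s,\Psi)}(\R^d)}^p\,dy\lesssim\|f\|_{B_{p,q}^s(\R^N)}^p,$$
from which the a.e.\ membership follows at once, since a nonnegative function integrable in $y$ must be finite for a.e.\ $y$. To make the left-hand side tractable I would work with a difference (rather than a Littlewood--Paley) characterisation, because finite differences taken only in the first $d$ variables commute with the restriction $f\mapsto f(\cdot,y)$, whereas frequency projections do not. Fixing an integer $r>s$, I would use the equivalent quasi-norm
$$\big\|g\big\|_{B_{p,p}^{(s,\Psi)}(\R^d)}^p\approx\|g\|_{L^p(\R^d)}^p+\int_{|h|\le1,\,h\in\R^d}\frac{\Psi(|h|)^p}{|h|^{sp+d}}\,\big\|\Delta_h^r g\big\|_{L^p(\R^d)}^p\,dh,$$
which is legitimate precisely because the fine index equals $p$, so the inner and outer integrabilities match.

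The key step is then a Tonelli computation. Since $\Delta_h^r[f(\cdot,y)](x')=\Delta_{(h,0)}^r f(x',y)$ for $h\in\R^d$, integrating the displayed quasi-norm in $y$ and exchanging the order of integration yields
$$\int_{\R^{N-d}}\big\|f(\cdot,y)\big\|_{B_{p,p}^{(s,\Psi)}(\R^d)}^p\,dy\approx\|f\|_{L^p(\R^N)}^p+\int_{|h|\le1,\,h\in\R^d}\frac{\Psi(|h|)^p}{|h|^{sp+d}}\,\big\|\Delta_{(h,0)}^r f\big\|_{L^p(\R^N)}^p\,dh.$$
Discretising the $h$-integral over dyadic shells $2^{-j-1}\le|h|\le2^{-j}$ and using that $\Psi$ is slowly varying, the right-hand side is comparable to $\|f\|_{L^p(\R^N)}^p+\sum_{j\ge0}2^{jsp}\Psi(2^{-j})^p\,b_j^p$, where $b_j^p:=2^{jd}\int_{2^{-j-1}\le|h|\le2^{-j}}\|\Delta_{(h,0)}^rf\|_{L^p(\R^N)}^p\,dh$ is a purely tangential modulus at scale $2^{-j}$. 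Because $\|\Delta_{(h,0)}^rf\|_{L^p(\R^N)}\le\omega_r(f,2^{-j})_{L^p(\R^N)}=:\omega_j$ whenever $|h|\le2^{-j}$, one gets the clean bound $b_j\lesssim\omega_j$, so the whole expression is controlled by $\|f\|_{L^p(\R^N)}^p+\sum_{j\ge0}2^{jsp}\Psi(2^{-j})^p\,\omega_j^p$.

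It remains to convert this $\Psi$-weighted $\ell^p$-sum into the $\ell^q$-sum defining the $B_{p,q}^s(\R^N)$ norm; this is where $p<q$ and condition \eqref{cond:psi} enter, and it is the conceptual heart of the argument. Writing $2^{jsp}\Psi(2^{-j})^p\omega_j^p=\Psi(2^{-j})^p\,(2^{js}\omega_j)^p$ and applying H\"older with conjugate exponents $q/p$ and $(q/p)'=q/(q-p)$ gives
$$\sum_{j\ge0}2^{jsp}\Psi(2^{-j})^p\omega_j^p\le\Big(\sum_{j\ge0}\Psi(2^{-j})^{\varkappa}\Big)^{(q-p)/q}\Big(\sum_{j\ge0}(2^{js}\omega_j)^q\Big)^{p/q},$$
since $p\,(q/p)'=pq/(q-p)=\varkappa$. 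The first factor is finite by \eqref{cond:psi}, the second is $\approx\|f\|_{B_{p,q}^s(\R^N)}^{p}$ by the modulus-of-smoothness characterisation, and the case $q=\infty$ is identical, using the $\ell^{\infty}$--$\ell^1$ H\"older inequality together with $\varkappa=p$. The main obstacle I anticipate is not this final estimate but the bookkeeping in the first two steps: rigorously justifying the sup-free integral quasi-norm for the generalised-smoothness space $B_{p,p}^{(s,\Psi)}$ with equivalence constants controlled uniformly in the slowly varying $\Psi$, and checking that the Tonelli exchange is licit. Once these are in place, the proof closes along the displayed chain of inequalities.
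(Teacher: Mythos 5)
Your proof is correct, but it takes a genuinely different route from the one behind Theorem~\ref{TH:old}: as the remark following the statement indicates, the original argument relies on a Littlewood--Paley decomposition of $f$ (combined with quarkonial/subatomic decompositions or with differences for the target space), and must then cope with the fact that restriction to a slice does not commute with frequency localisation. You avoid Littlewood--Paley altogether by characterising both spaces through finite differences taken in the first $d$ variables only --- which do commute with restriction --- and you obtain the stronger quantitative estimate $\int_{\R^{N-d}}\|f(\cdot,y)\|_{B_{p,p}^{(s,\Psi)}(\R^d)}^p\,\mathrm{d}y\lesssim\|f\|_{B_{p,q}^s(\R^N)}^p$ by Tonelli. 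The decisive step is the same in both proofs: H\"older's inequality with exponents $q/p$ and $q/(q-p)$, where $p\cdot\frac{q}{q-p}=\varkappa$ is exactly how \eqref{cond:psi} enters; your $q=\infty$ case is also handled correctly since then $\varkappa=p$. What your approach buys is economy of tools and a clean integrated inequality; what it costs is the need for two nontrivial (but classical) quasi-norm equivalences, which you rightly identify as the only delicate points: the equivalence, for $s>\sigma_p$ and $M>s$, of the quasi-norm of Definition~\ref{G:BESOV} (which carries a $\sup_{|h|\leq t}$) with your sup-free ``ball means of differences'' quasi-norm --- known for $0<p<\infty$ in the classical case and extended to slowly varying $\Psi$ in the literature on generalised smoothness --- and the characterisation of $B_{p,q}^s(\R^N)$ by moduli of smoothness. (Uniformity of the equivalence constants in $\Psi$ is not actually needed, since $\Psi$ is fixed; only uniformity in the function being measured matters, and that is automatic.) Modulo citing these equivalences, your argument closes.
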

\begin{remark}
This result was initially stated for admissible functions in the sense of Edmunds and Triebel \cite{ET1,ET2}, but the proof generalises to slowly varying functions without difficulty. (Here ``slowly varying'' is meant in the sense of Karamata, see Definition~\ref{DE:slow} below.)
\end{remark}
Nevertheless, it is natural to ask:
\begin{center}
\emph{Does this precisely ``measure" the loss of regularity?}
\end{center}
We gave only a partial result (see \cite[Theorem 1.7]{Brasseur}) suggesting that this might be the case but, so far, the question has remained open. This note fixes this gap by showing that, indeed, the above fully characterises the smoothness of the partial maps.

%However, the results we obtained in \cite{Brasseur} (see in particular \cite[Theorem 1.7]{Brasseur}) did not allow us to conclude that this characterisation is sharp: only partial results, with non-natural conditions, were given. This note fixes this gap.

The main result of this note is the following:
\begin{theo}\label{TH:MAIN}
Let $N\geq2$, $1\leq d<N$, $0<p<q\leq\infty$, $s>\sigma_p$ and let $\Psi$ be a slowly varying function that does not satisfy \eqref{cond:psi}. Then, there exists a function $f\in B_{p,q}^s(\R^N)$ such that
$$ f(\cdot,y)\notin B_{p,\infty}^{(s,\Psi)}(\R^d)\quad\text{for a.e. }y\in\R^{N-d}. $$
\end{theo}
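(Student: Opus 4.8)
The plan is to construct the required $f$ as a lacunary superposition of tensor-product building blocks, one per dyadic frequency scale, and to exploit the \emph{divergence} of $\sum_{j}\Psi(2^{-j})^\varkappa$ (the negation of \eqref{cond:psi}) to force the restrictions to blow up. Writing points of $\R^N$ as $(x,y)\in\R^d\times\R^{N-d}$, I would fix a Schwartz function $\theta$ on $\R^d$ whose Fourier transform is supported in a fixed annulus $\{|\xi|\sim1\}$ and set $\theta_j(x)=2^{jd/p}\theta(2^j x)$, so that $\theta_j$ is frequency-localised at $|\xi_x|\sim2^j$ with $\|\theta_j\|_{L^p(\R^d)}\sim1$, and then look for
$$ f(x,y)=\sum_{j\ge0} a_j\,\theta_j(x)\,g_j(y), $$
where each $g_j$ is a smooth, \emph{low-frequency} bump ($y$-frequencies $\ll2^j$) with $0\le g_j\le1$, essentially the indicator of a set $E_j\subset\R^{N-d}$, so that $\|g_j\|_{L^p}\sim m_j^{1/p}$ with $m_j=|E_j|$. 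Since $\theta_jg_j$ then has full frequency $\sim2^j$, the $j$-th Littlewood--Paley block of $f$ is essentially $a_j\theta_jg_j$, giving $\|f\|_{B^s_{p,q}(\R^N)}^q\sim\sum_j(2^{js}|a_j|\,m_j^{1/p})^q$, while the restriction obeys $\|\Delta_j^x f(\cdot,y)\|_{L^p(\R^d)}\sim|a_j|\,\mathbf{1}_{E_j}(y)$, so that the $B^{(s,\Psi)}_{p,\infty}(\R^d)$-quasinorm of $f(\cdot,y)$ is bounded below by $\sup_j 2^{js}\Psi(2^{-j})|a_j|\,\mathbf{1}_{E_j}(y)$.

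The combinatorial heart is the choice of scales, coefficients and sets. Put $c_j=2^{js}\Psi(2^{-j})|a_j|$; then membership reads $\sum_j c_j^q\,\Psi(2^{-j})^{-q}m_j^{q/p}<\infty$ and the goal becomes $\sup\{c_j:y\in E_j\}=\infty$ for a.e.\ $y$. I would partition $\{0,1,2,\dots\}$ into consecutive blocks $I_k$ with $S_k:=\sum_{j\in I_k}\Psi(2^{-j})^\varkappa\in[2^k,2^{k+1}]$, which is possible precisely because the series diverges (and because slow variation forbids large jumps between consecutive terms). On $I_k$ I tile a cube $Q_k=[-R_k,R_k]^{N-d}$ by sets $E_j$ of volumes $m_j=|Q_k|\,\Psi(2^{-j})^\varkappa/S_k$ — the choice that, by Hölder (equivalently a Lagrange computation, using $q/(q/p-1)=\varkappa$), minimises $\sum_{j\in I_k}\Psi(2^{-j})^{-q}m_j^{q/p}$ subject to $\sum_{j\in I_k}m_j=|Q_k|$ — and I set $c_j=\Theta_k$ for $j\in I_k$. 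A direct computation then yields, per block,
$$ \sum_{j\in I_k}c_j^q\,\Psi(2^{-j})^{-q}m_j^{q/p}=\Theta_k^q\,|Q_k|^{q/p}S_k^{\,1-q/p}=\big(\Theta_k\,|Q_k|^{1/p}S_k^{-1/\varkappa}\big)^q, $$
using $1-q/p=-q/\varkappa$. Taking $R_k=\Theta_k=2^{\delta k}$ with $\delta>0$ small makes the base $\lesssim 2^{-ck}$, hence summable, so $f\in B^s_{p,q}(\R^N)$; on the other hand $\Theta_k\to\infty$ and $R_k\to\infty$, so every fixed $y$ lies in $Q_k$ for all large $k$, hence in some $E_j$ with $j\in I_k$ and $c_j=\Theta_k\to\infty$. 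Thus $\sup\{c_j:y\in E_j\}=\infty$ for \emph{every} $y\in\R^{N-d}$, which is stronger than the a.e.\ blow-up claimed.

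The genuine work, and the step I expect to be the main obstacle, is justifying the two heuristic quasinorm equivalences above. I would realise $E_j$ as essentially disjoint cubes of side $\ell_j=m_j^{1/(N-d)}$ and take $g_j$ a smooth bump adapted to $E_j$, mollified at scale $\ell_j$; since the chosen $m_j$ decay only sub-exponentially, one has $\ell_j\gg2^{-j}$ for large $j$, so the $y$-frequencies of $g_j$ are $\lesssim\ell_j^{-1}\ll2^j$ and do not interfere with the $x$-frequency $2^j$ of $\theta_j$. Then $\theta_jg_j$ is frequency-localised in an annulus $\sim2^j$ of $\R^N$, and the equivalence $\|\sum_j a_j\theta_jg_j\|_{B^s_{p,q}(\R^N)}^q\sim\sum_j 2^{jsq}\|a_j\theta_jg_j\|_{L^p(\R^N)}^q$ follows from the standard spectral (Littlewood--Paley) characterisation of Besov spaces for lacunary series, valid here because $s>\sigma_p$; Fubini gives $\|\theta_jg_j\|_{L^p(\R^N)}=\|\theta_j\|_{L^p(\R^d)}\|g_j\|_{L^p(\R^{N-d})}\sim m_j^{1/p}$. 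The delicate point is the lower bound for the restriction: for a.e.\ $y$ the slice $f(\cdot,y)=\sum_j a_jg_j(y)\theta_j$ is a well-defined lacunary series in $x$, and I would apply the lower half of the same spectral characterisation on each slice to get its $B^{(s,\Psi)}_{p,\infty}(\R^d)$-quasinorm $\gtrsim\sup_j 2^{js}\Psi(2^{-j})|a_j|\,|g_j(y)|$, which is infinite by the previous paragraph. The remaining adjustments — the $q=\infty$ endpoint, where the membership $\ell^q$-sum becomes a supremum and the optimisation is redone with $\varkappa=p$, and the routine control of mollification errors and overlaps of the $E_j$ — are minor and leave the scheme intact.
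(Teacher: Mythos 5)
Your construction is sound and reaches the stated conclusion, but it travels a genuinely different road from the paper's. The paper builds $f$ from a subatomic (quarkonial) decomposition, $f=\sum_{j,k}\lambda_{j,k}2^{-j(s-N/p)}\psi(2^jx-m_{j,k})$, and pushes all the work into a sequence lemma: at scale $j$ one places $\lfloor 2^j\Gamma_{j,1}\rfloor$ nonzero coefficients of size $S_j^{L/p}/\Psi(2^{-j})$ with $\Gamma_{j,1}=\Psi(2^{-j})^\varkappa/S_j$, $S_j=\sum_{k\le j}\Psi(2^{-k})^\varkappa$; convergence of the $\ell^{q/p}$ sums comes from the Abel--Dini type Lemma~\ref{LE} ($\sum u_j/U_j^m<\infty$ iff $m>1$), and the a.e.\ blow-up of the slices from a combinatorial rearrangement (Lemma~\ref{LE:TECH}) guaranteeing that every ray $j\mapsto\lfloor 2^jy\rfloor$ meets the big coefficients infinitely often. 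You instead take a Littlewood--Paley tensor product $\sum_j a_j\theta_j(x)g_j(y)$ and replace both the Abel--Dini lemma and the rearrangement by a block/tiling scheme: grouping scales into blocks $I_k$ with $\sum_{j\in I_k}\Psi(2^{-j})^\varkappa\sim 2^k$ turns the membership sum into a geometric series (your per-block identity $\Theta_k^q|Q_k|^{q/p}S_k^{1-q/p}$ is the exact analogue of the paper's computation $\big(\Gamma_{j,1}S_j^L/\Psi(2^{-j})^p\big)^{q/p}=\Gamma_{j,\frac{q}{p}(1-L)}$, with the same exponent arithmetic $\frac{q}{p}(\varkappa-p)=\varkappa$), and tiling expanding cubes by the $E_j$ replaces the rearrangement. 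Your route is arguably more transparent and yields blow-up on a full-measure set by direct coverage; what the paper's route buys is independence from any regularity of $\Psi$ (its Lemma~\ref{LE:main} needs only positivity, whereas your partition into blocks with $S_k\in[2^k,2^{k+1}]$ tacitly needs that no single term $\Psi(2^{-j})^\varkappa$ dwarfs the running sum -- true for slowly varying $\Psi$, but an extra hypothesis) and avoidance of Fourier-multiplier technology at the slice level (the paper's lower bound uses only the difference quasi-norm of Definition~\ref{G:BESOV}, while yours invokes the Littlewood--Paley characterisation of $B^{(s,\Psi)}_{p,\infty}(\R^d)$ on each slice). Two points in your sketch deserve explicit care rather than a wave: (i) a compactly supported mollified indicator $g_j$ is not band-limited, so either take $g_j$ band-limited with Schwartz tails (and re-check the $L^p$ bound $\|g_j\|_{L^p}\lesssim m_j^{1/p}$ for $p<1$, which works when each $E_j$ is a single cube) or absorb the tails into the almost-orthogonality estimates; (ii) after mollification the conclusion ``for every $y$'' degrades to ``for a.e.\ $y$'', since $y$ may sit near $\partial E_j$ for the relevant $j\in I_k$; choosing the mollification scale $\delta_j$ with $2^{-j}\ll\delta_j\ll\ell_j$ and $\sum_k\sup_{j\in I_k}\delta_j/\ell_j<\infty$, Borel--Cantelli disposes of the exceptional set. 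Neither issue threatens the argument, and a.e.\ blow-up is all the theorem asks for.
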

Hence we arrive at a sharp characterisation of partial maps of Besov functions. That is, for all parameters $s$, $p$ and $q$ as in \eqref{sigmap}, the ``compensated" restriction property:
$$ f\in B_{p,q}^s(\R^N) \Longrightarrow f(\cdot,y)\in B_{p,\min(p,q)}^{(s,\Psi)}(\R^d) \text{ for a.e. }y\in\R^{N-d}, $$
holds if, and only if, \eqref{cond:psi} holds. (Here, we implicitely extend \eqref{cond:psi} to the case $q\leq p$ by replacing the right-hand side by its positive part with the usual convention $1/\infty = 0$, so that $\Psi$ is merely required to be bounded when $q\leq p$ and the corresponding Besov space of generalised smoothness boils down to $B_{p,q}^s(\R^d)$.) %Here (and in Theorem \ref{TH:old}), the microscopic index $q$ can be equivalently replaced by $\min(p,q)$, as follows from \cite[Theorem 7.2]{Brasseur}.
%Observe that, even though we loose a bit of smoothness (as measured by $\Psi$), we nevertheless have more room in terms of the microscopic index $q$.
\begin{remark}
Even though the above results are stated for slowly varying functions $\Psi$, our construction does not require this assumption. This assumption is, in fact, made to ensure that the Besov spaces of generalised smoothness enjoy ``nice" representations (that is, equivalent quasi-norms via quarkonial/subatomic decompositions, Littlewood-Paley and finite differences). This is the only place where an assumption on $\Psi$ comes into play. The important thing to note is that \emph{our main result holds whenever an assumption on $\Psi$ is made so that Besov spaces of generalised smoothness enjoy the above-mentionned representations}. To be precise, (the proof of) Theorem~\ref{TH:old} uses only Littlewood-Paley and either quarkonial/subatomic decompositions or representation via differences, while (the proof of) Theorem~\ref{TH:MAIN} requires only representation via differences.
\end{remark}
\begin{remark}\label{RQ:q}
We briefly mentioned that, usually, the parameter $q$ plays no role on the properties of Besov spaces. There are, however, a few results where it does. It is not our intention to make a comprehensive list of these, but we want to point out two of them which are of some relevance. Both hold in the borderline case $0<p<1$ only. The first result in this direction is due to Johnsen who showed (see \cite[Theorem 1.2]{Johnsen}) that the distributional trace operator of the critical Besov space $B_{p,q}^{n/p-n+1}(\R^n)$ enjoys different properties and codomains depending on whether $q\leq p$ or $p<q$. The second result we would like to mention is due to Caetano and Haroske who showed (see \cite[Corollary 3.17]{Caetano}) that traces on fractal sets $\Gamma$ of spaces of the type of $B_{p,q}^{(s,\Psi)}$ embed in $L^p(\Gamma)$ if a certain sequence $\sigma$ depending on $\Psi$ satisfies some conditions which differ according to whether $q\leq p$ or $p<q$ and, in the latter case, $\sigma$ is required to belong to the sequence space $\ell^\varkappa(\N)$. %, which resembles our own results.
Although these results differ in nature from our own, the family resemblance is appealing enough to be worth mentioning.
\end{remark}

\section{Notation and definitions}

As usual, $\R$ denotes the set of all real numbers and $\N$ the set of all nonnegative integers $\{0,1,2,...\}$. %, and the set of all \emph{positive} integers $\{1,2,...\}$ will be denoted by $\N^*$.
The $N$-dimensional real Euclidian space will be denoted by $\R^N$. Similarly, $\N^N$ denotes the lattice of points $(m_1,\cdots,m_N)\in\R^N$ with $m_j\in\N$ for all $j\in\{1,\cdots,N\}$. We will sometimes make use of the approximatively-less-than symbol ``$\lesssim$", that is we write $a\lesssim b$ for $a\leq C\,b$ where $C>0$ is a constant independent of $a$ and $b$. Similarly, $a\gtrsim b$ means that $b\lesssim a$. Also, we denote by $\lfloor x\rfloor$ the integral part of $x\in\R$ and by $x_+$ its positive part. % $\max\{0,x\}$.
%Also, we write $a\sim b$ whenever $a\lesssim b$ and $b\lesssim a$.

Let $f$ be a function in $\mathbb{R}^N$. Given $M\in\mathbb{N}$ and $h\in\mathbb{R}^N$, let
$$ \Delta_h^Mf(x)=\sum_{j=0}^M(-1)^{M-j}\binom{M}{j}f(x+hj). $$
%be the iterated difference operator.

We will define the classical Besov spaces and their generalised counterpart all at once. But before doing so, we recall the definition of slowly varying functions.
\begin{de}\label{DE:slow}
Let $\Psi$ be a positive, measurable function defined on the interval $(0,1]$. We say that $\Psi$ is \emph{slowly varying} if it satisfies
%A \emph{slowly varying function} is positive, measurable function $f$ defined on some neighborhood, $[A,\infty)$, of infinity, satisfying
$$ \lim_{t\to\infty}\,\frac{\Psi(r t)}{\Psi(t)}=1 \quad \text{for all }r\in(0,1]. $$
%for all $\lambda>0$.
\end{de}
\begin{remark}
These functions were introduced by Karamata in the mid thirties in two seminal papers (see \cite{Karamata1,Karamata2}). An extensive study of their properties can be found in the monograph of Bingham, Goldie and Teugels \cite{Bingham}, where various examples are given (see also \cite{Gut}).
\end{remark}
\begin{de}\label{G:BESOV}
Let $0<p,q\leq\infty$, $s>\sigma_p$ and let $\Psi$ be a slowly varying function. Let $M\in\N$ with $M>s$. The \emph{Besov space of generalised smoothness} $B_{p,q}^{(s,\Psi)}(\mathbb{R}^N)$ consists of all functions $f\in L^p(\mathbb{R}^N)$ such that
\begin{align*}
[f]_{B_{p,q}^{(s,\Psi)}(\mathbb{R}^N)}=\left(\int_{0}^1\,\sup_{|h|\leq t}\|\Delta_h^Mf\|_{L^p(\R^N)}^q\frac{\Psi(t)^q}{t^{1+sq}}\,\mathrm{d}t\right)^{1/q}<\infty, %\label{G:snBesov}
\end{align*}
which, in the case $q=\infty$, is to be understood as
$$ [f]_{B_{p,\infty}^{(s,\Psi)}(\mathbb{R}^N)}=\sup_{0<t\leq1}t^{-s}\Psi(t)\sup_{|h|\leq t}\|\Delta_h^Mf\|_{L^p(\mathbb{R}^N)}<\infty. $$
The space $B_{p,q}^{(s,\Psi)}(\mathbb{R}^N)$ is naturally endowed with the quasi-norm
\begin{align}
\|f\|_{B_{p,q}^{(s,\Psi)}(\mathbb{R}^N)}=\|f\|_{L^p(\mathbb{R}^N)}+[f]_{B_{p,q}^{(s,\Psi)}(\mathbb{R}^N)}. \label{G:NORM:FD}
\end{align}
When $\Psi\equiv1$, the space $B_{p,q}^{(s,1)}(\R^N)$ is simply called \emph{Besov space} and denoted by $B_{p,q}^s(\R^N)$.
\end{de}
\begin{remark}\label{G:M}
Different choices of $M$ in \eqref{G:NORM:FD} yield equivalent quasi-norms.
\end{remark}

Besov spaces of generalised smoothness have a rather long and rich history. We refer to our previous paper \cite{Brasseur} (and references therein) for further details on the matter.

\section{Preliminary lemmata}

We first state an easy (yet important) lemma.
\begin{lemma}\label{LE}
Let $(u_j)_{j\in\N}$ be a positive sequence with divergent series. Then, the series
$$ \sum_{j=1}^\infty\frac{u_j}{(u_1+\cdots+u_j)^m}, $$
is convergent for all $m>1$ and divergent otherwise.
\end{lemma}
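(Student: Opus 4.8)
Write $S_j=u_1+\cdots+u_j$ for the partial sums, so that $(S_j)$ is strictly increasing and $S_j\to\infty$ by hypothesis. The plan is to compare the general term $u_j/S_j^m=(S_j-S_{j-1})/S_j^m$ with an integral of the decreasing function $x\mapsto x^{-m}$, treating the two regimes $m>1$ and $m\leq1$ separately. The key elementary estimate is the telescoping comparison obtained by bounding $(S_j-S_{j-1})/S_j^m$ against $\int_{S_{j-1}}^{S_j}x^{-m}\,\mathrm{d}x$ from above or below depending on the sign considerations forced by $m$.

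For the convergent case $m>1$, since $x\mapsto x^{-m}$ is decreasing and $S_{j-1}<x<S_j$ gives $x^{-m}\geq S_j^{-m}$, I would write
\begin{align*}
\frac{u_j}{S_j^m}=\frac{S_j-S_{j-1}}{S_j^m}\leq\int_{S_{j-1}}^{S_j}\frac{\mathrm{d}x}{x^m}.
\end{align*}
Summing over $j\geq2$ telescopes the right-hand side into $\int_{S_1}^\infty x^{-m}\,\mathrm{d}x=S_1^{1-m}/(m-1)<\infty$, and the first term $u_1/S_1^m$ is finite, so the series converges.

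For the divergent case $0<m\leq1$, the cleanest route is to reduce to $m=1$, since $S_j\to\infty$ forces $S_j^m\leq S_j$ for $j$ large, whence $u_j/S_j^m\geq u_j/S_j$ eventually; thus it suffices to show $\sum u_j/S_j=\infty$. Here I expect the main (though still mild) obstacle: the naive integral comparison $u_j/S_j\geq\int_{S_{j-1}}^{S_j}x^{-1}\,\mathrm{d}x=\log(S_j/S_{j-1})$ telescopes to $\log(S_j/S_1)\to\infty$, which already closes the case provided the increments behave well. The subtlety is that this lower bound $u_j/S_j\geq\log(S_j/S_{j-1})$ is valid because $x^{-1}\leq S_{j-1}^{-1}$ on $(S_{j-1},S_j)$ would give the wrong direction; instead one uses $x^{-1}\geq S_j^{-1}$ on that interval to get exactly $u_j/S_j\geq\int_{S_{j-1}}^{S_j}x^{-1}\mathrm{d}x$. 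Summing telescopes to a divergent logarithm, completing the proof.
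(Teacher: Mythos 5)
Your treatment of the convergent case $m>1$ is correct and is essentially the paper's own argument: the paper bounds each term $u_j/U_j^m$ by the area under the curve $x\mapsto x^{-m}$ over $[U_{j-1},U_j]$ and concludes by telescoping, exactly as you do. For $m\leq1$ the paper gives no proof at all; it simply cites the classical Abel--Dini result (\cite{Ash}, \cite[p.79]{Rudin}), so you are attempting more than the paper does there --- but your attempt contains a genuine error.

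The inequality $u_j/S_j\geq\int_{S_{j-1}}^{S_j}x^{-1}\,\mathrm{d}x$ that you use in the case $0<m\leq1$ is false; the estimate you invoke actually proves the reverse. On $(S_{j-1},S_j)$ one has $x^{-1}\geq S_j^{-1}$, so integrating gives $\int_{S_{j-1}}^{S_j}x^{-1}\,\mathrm{d}x\geq(S_j-S_{j-1})S_j^{-1}=u_j/S_j$, i.e.\ the integral is an \emph{upper} bound for $u_j/S_j$ (equivalently, $1-t\leq-\log t$ for $t=S_{j-1}/S_j\in(0,1)$). Concretely, $u_1=1$, $u_2=100$ gives $u_2/S_2=100/101<1$ while $\log(S_2/S_1)=\log 101>4$. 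Hence the telescoping divergence of the integrals says nothing about $\sum u_j/S_j$. The standard repair is the Cauchy-criterion argument: for fixed $n$,
$$ \sum_{j=n+1}^{n+k}\frac{u_j}{S_j}\;\geq\;\frac{S_{n+k}-S_n}{S_{n+k}}\;=\;1-\frac{S_n}{S_{n+k}}\;\longrightarrow\;1\quad(k\to\infty), $$
so the tails of $\sum u_j/S_j$ do not tend to $0$ and the series diverges. Your reduction from general $m\leq1$ to $m=1$ (using $S_j^m\leq S_j$ once $S_j\geq1$) is fine and can be kept.
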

\begin{proof}
The case $m\leq1$ being already well-known (see e.g. \cite{Ash} or \cite[p.79]{Rudin}), it remains to prove the result for $m>1$. Set $U_j=u_1+\cdots+u_j$, for all $j\geq1$. Then, the area of the gray rectangle in Figure~\ref{FIG1} is $(U_j-U_{j-1})\times 1/U_j^m=u_j/U_j^m$.
Thus, the series $\sum u_j/U_j^m$ is controlled by the area under the curve $x\mapsto 1/x^m$ which is finite on any interval of the form $[X,\infty)$ with $X>0$ (because $m>1$). This completes the proof.
\end{proof}

\begin{figure}[!ht]
\centering
\includegraphics[scale=0.45]{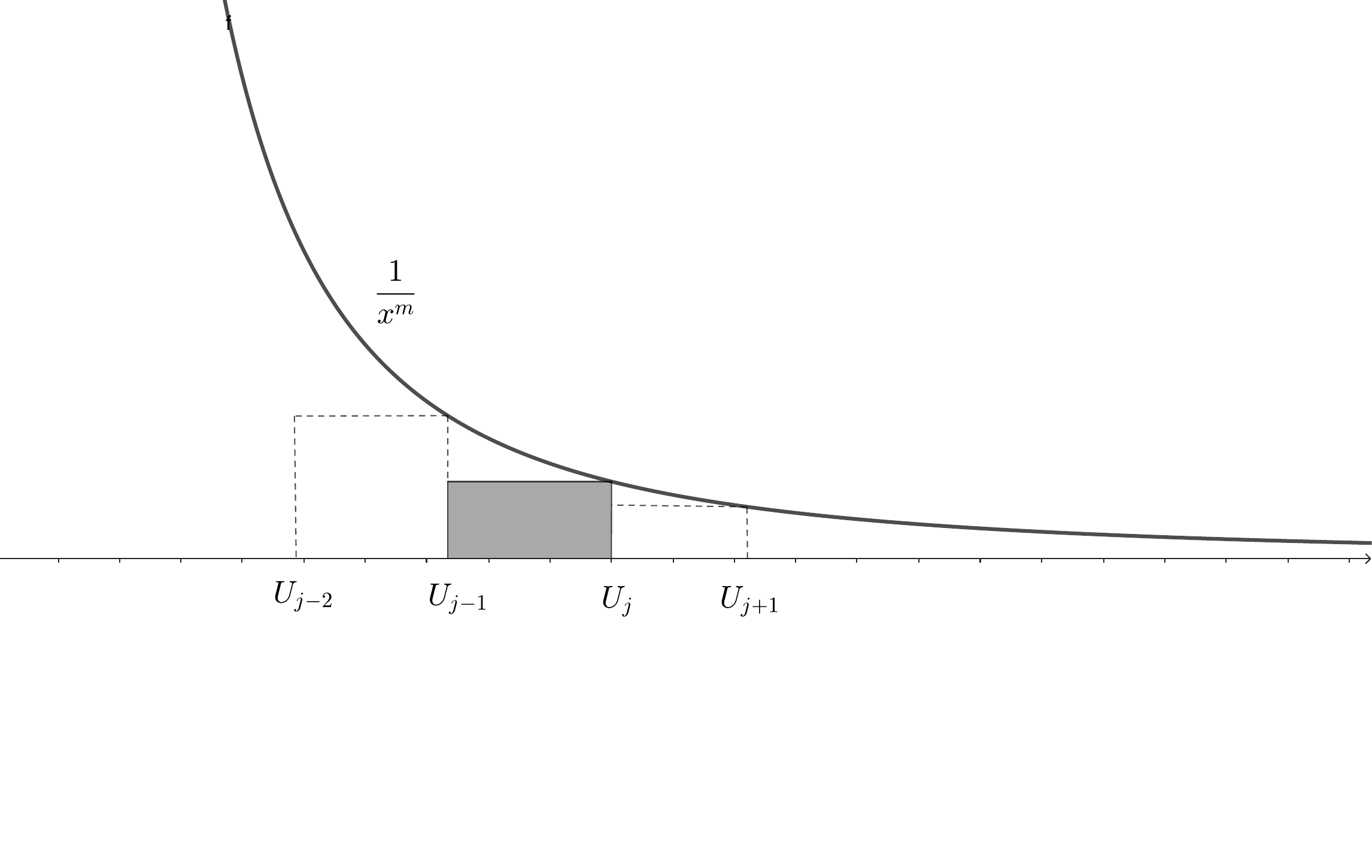}
\caption{\footnotesize Each rectangle represents a term of the series $\sum u_j/U_j^m$ and the continuous line the curve $1/x^m$. The gray rectangle has width $U_j-U_{j-1}=u_j$ and height $1/U_j^m$.}\label{FIG1}
\end{figure}

%We now state a technical result which will be of crucial importance. It was implicitly used in the proof of \cite[Theorem 1.7]{Brasseur} but, for the sake of clarity, we state it explicitly.

For the sake of clarity, we now state a technical result that summarises the main logic behind the construction in (the proof of) \cite[Theorem 1.7]{Brasseur}, which we shall also need here.

\begin{lemma}\label{LE:TECH}
Let $(\tau_k)_{k\in\N}\subset[0,1]$ be a sequence with divergent series and let $(\theta_k)_{k\in\N}$ be an arbitrary sequence. Let $(u_k)_{k\in\N}$ be the sequence such that $u_0=u_1=0$ and such that, for all $j\geq1$, the $\lfloor 2^j\tau_j\rfloor$ first terms of $(u_k)_{k\in\N}$ on the discrete dyadic interval $T_j=\{2^j, \cdots, 2^{j+1}-1\}$ have value $\theta_j$ and the remaining terms on $T_j$ are all equal to zero. Then, there exists a rearrangement, $(u_k^*)_{k\in\N}$, of $(u_k)_{k\in\N}$, which preserves the average on each $T_j$ and such that, for all $x\in[1,2)$, there exists a countably infinite set $J_x\subset\N$ such that
$$ u_{\lfloor 2^j x\rfloor}^* = \theta_j  \quad\text{for all }j\in J_x. $$
\end{lemma}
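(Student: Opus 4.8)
The plan is to recast the statement as a purely combinatorial covering problem on the dyadic tree. For $x\in[1,2)$ write $y=x-1\in[0,1)$; then $\lfloor 2^j x\rfloor=2^j+\lfloor 2^j y\rfloor\in T_j$, so prescribing the rearrangement amounts to choosing, for each $j\geq1$, a set $S_j\subset\{0,\dots,2^j-1\}$ of positions within $T_j$ that carry the value $\theta_j$. To keep the average on $T_j$ unchanged we must take $|S_j|=\lfloor 2^j\tau_j\rfloor$, i.e. the same number of nonzero entries as in the original $(u_k)$; any such choice is then a genuine rearrangement, obtained by permuting inside each block $T_j$ (and $\bigcup_{j\geq1}T_j=\{2,3,4,\dots\}$ while $u_0=u_1=0$, so this permutes the whole sequence). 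Identifying the position $i\in\{0,\dots,2^j-1\}$ with the dyadic interval $[i2^{-j},(i+1)2^{-j})$ and writing $A_j=\bigcup_{i\in S_j}[i2^{-j},(i+1)2^{-j})$, the index $\lfloor 2^j y\rfloor$ lies in $S_j$ precisely when $y\in A_j$. Thus the lemma reduces to exhibiting dyadic sets $A_j$, each a union of exactly $\lfloor 2^j\tau_j\rfloor$ level-$j$ dyadic intervals, such that every $y\in[0,1)$ belongs to $A_j$ for infinitely many $j$.

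I would build these sets by \emph{winding} around the circle $\R/\Z$. Set $c_j=\lfloor 2^j\tau_j\rfloor\,2^{-j}$, the exact measure allowed at level $j$, and let $Q_j=c_1+\dots+c_j$ with $Q_0=0$. The key elementary point is that each $c_i$ is a multiple of $2^{-i}$, so $Q_{j-1}$ and $Q_j$ are both multiples of $2^{-j}$; hence the arc $A_j:=[Q_{j-1},Q_j)\bmod 1$ has dyadic endpoints and is automatically a union of exactly $c_j2^j=\lfloor 2^j\tau_j\rfloor$ level-$j$ dyadic intervals (wrapping around the circle if necessary), which reads off the required position set $S_j$. This dyadic-rounding device is what makes the count come out exactly right and is, I expect, the main point to get correct: replacing $\tau_j$ by the slightly smaller dyadic $c_j$ removes any mismatch between the prescribed cardinality $\lfloor 2^j\tau_j\rfloor$ and a genuine union of dyadic intervals, and avoids losing boundary slivers. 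Moreover $c_j\geq\tau_j-2^{-j}$, so $\sum_j c_j\geq\sum_j\tau_j-\sum_j 2^{-j}=\infty$, whence $Q_j\to\infty$.

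Finally I would read off the covering property. The arcs $[Q_{j-1},Q_j)$, $j\geq1$, partition $[0,\infty)$, each has length $c_j\leq\tau_j\leq1$, and $Q_j\to\infty$ means they wind around $\R/\Z$ infinitely often. Fix $y\in[0,1)$: the points $y,y+1,y+2,\dots$ each lie in exactly one of these arcs, and since no arc has length exceeding $1$ no single arc contains more than finitely many of them; therefore infinitely many distinct arcs meet $\{y+n:n\in\N\}$, that is $y\in A_j\bmod 1$ for infinitely many $j$. Setting $J_x=\{j:\,y\in A_j\bmod 1\}$ with $y=x-1$ yields a countably infinite set with $u^*_{\lfloor 2^j x\rfloor}=\theta_j$ for all $j\in J_x$. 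Since the rearrangement only permutes entries within each $T_j$, it preserves the multiset of values---hence the average---on every $T_j$, which completes the argument.
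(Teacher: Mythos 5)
Your proof is correct, and it is essentially the construction the paper relies on: the paper's proof simply defers to the rearrangement of \cite[Lemma 3.6]{Brasseur}, which is the same ``winding'' of consecutive arcs of measure $\lfloor 2^j\tau_j\rfloor 2^{-j}$ around the circle, using the divergence of $\sum\tau_j$ to guarantee that every point is covered infinitely often. Your dyadic-rounding observation (that the partial sums $Q_j$ are multiples of $2^{-j}$, so each arc is exactly a union of $\lfloor 2^j\tau_j\rfloor$ level-$j$ dyadic intervals) is precisely the detail that makes the cardinality, and hence the block averages, come out right.
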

\begin{proof}
The proof essentially coincides with the rearrangement in the proof of \cite[Lemma~3.6]{Brasseur}, up to minor changes. There, it was carried out with $\tau_j=1/j$ and $\theta_j=j$ but the rearrangement made no use of the specific form of these two sequences. Only the divergence of the series $\sum\tau_j$ and the fact that $0\leq\tau_j\leq1$ was needed. Aside from these minor modifications, the arguments given in \cite{Brasseur} adapt without difficulty.
\end{proof}

We are now in position to prove the most important result of this section.

\begin{lemma}\label{LE:main}
Let $0<p<q\leq\infty$ and let $\Psi$ be a positive function defined on the interval $(0,1]$ which does not satisfy \eqref{cond:psi}. Then, there exists a positive sequence $(\lambda_{j,k})_{j,k\in\N}$ such that
$$ \bigg(\sum_{j=0}^\infty\bigg(\sum_{k=0}^\infty\lambda_{j,k}^p\bigg)^{\frac{q}{p}}\bigg)^\frac{1}{q}<\infty, $$
(with the usual modification if $q=\infty$) and
$$ \sup_{j=0}\, 2^{\frac{j}{p}}\lambda_{j,\lfloor 2^jx\rfloor}\Psi(2^{-j})=\infty \quad\text{for all }x\in[1,2). $$
\end{lemma}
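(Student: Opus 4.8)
The plan is to recycle the construction behind \cite[Theorem~1.7]{Brasseur} (encapsulated in Lemmata~\ref{LE} and~\ref{LE:TECH}), but to \emph{amplify the heights} $\theta_j$ so that the relevant diagonal terms blow up rather than merely staying bounded. I may assume $q<\infty$: when $q=\infty$ we have $\varkappa=p$, the first condition becomes $\sup_j(\sum_k\lambda_{j,k}^p)^{1/p}<\infty$, and the statement is exactly the one settled in \cite[Lemma~3.8]{Brasseur} (whose proof uses only positivity of $\Psi$).

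First I would fix the data fed into Lemma~\ref{LE:TECH}. Writing $v_j=\Psi(2^{-j})^\varkappa$ and $V_j=\sum_{k=1}^j v_k$, the failure of \eqref{cond:psi} means $\sum_j v_j=\infty$, hence $V_j\to\infty$. I then choose $\delta\in(0,1-p/q)$ — an interval that is nonempty precisely because $p<q$ — and set
\[ \tau_j=\frac{v_j}{V_j},\qquad \theta_j=\frac{V_j^{\delta}}{\Psi(2^{-j})^p}\qquad(j\geq1),\]
with $\tau_0=\theta_0=0$. Clearly $0\leq\tau_j\leq1$, while Lemma~\ref{LE} applied with $m=1$ shows that $\sum_j\tau_j=\sum_j v_j/V_j$ diverges. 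Thus $(\tau_j)$ and $(\theta_j)$ are admissible in Lemma~\ref{LE:TECH}, which furnishes a sequence $(u_k)$ (vanishing for $k=0,1$, equal to $\theta_j$ on the first $\lfloor2^j\tau_j\rfloor$ slots of each $T_j$ and to $0$ elsewhere) together with a rearrangement $(u_k^*)$ preserving the average on every $T_j$ and satisfying $u_{\lfloor2^jx\rfloor}^*=\theta_j$ for all $j$ in some countably infinite set $J_x\subset\N$, for each $x\in[1,2)$.

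Next I set $\lambda_{j,k}=2^{-j/p}(u_k^*)^{1/p}$ for $k\in T_j$ and $\lambda_{j,k}=0$ otherwise. For the convergence claim, preservation of averages gives
\[ \sum_{k=0}^\infty\lambda_{j,k}^p=\frac{1}{2^j}\sum_{k\in T_j}u_k^*=\frac{\lfloor2^j\tau_j\rfloor}{2^j}\,\theta_j\leq\tau_j\theta_j,\]
and the identity $\tfrac{q}{p}(\varkappa-p)=\varkappa$ (equivalent to the definition of $\varkappa$) turns $(\tau_j\theta_j)^{q/p}$ into $v_j/V_j^{(1-\delta)q/p}$. Since $\delta<1-p/q$ forces $(1-\delta)\tfrac qp>1$, Lemma~\ref{LE} makes $\sum_j v_j/V_j^{(1-\delta)q/p}$ converge, which is exactly the first assertion. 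For the divergence claim, observe that $\lfloor2^jx\rfloor\in T_j$ for every $x\in[1,2)$, so along $j\in J_x$ one computes
\[ 2^{j/p}\lambda_{j,\lfloor2^jx\rfloor}\Psi(2^{-j})=\big(u_{\lfloor2^jx\rfloor}^*\big)^{1/p}\Psi(2^{-j})=\theta_j^{1/p}\Psi(2^{-j})=V_j^{\delta/p}\longrightarrow\infty.\]
As $J_x$ is infinite, $\sup_{j\geq0}2^{j/p}\lambda_{j,\lfloor2^jx\rfloor}\Psi(2^{-j})=+\infty$ for every $x\in[1,2)$, completing the construction.

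The genuinely new point — and the main obstacle — is the divergence claim. In the earlier construction the choice $\theta_j=\Psi(2^{-j})^{-p}$ makes the displayed quantity identically $1$ on $J_x$, which suffices to blow up an $\ell^q$-norm but never a supremum. The remedy is to boost $\theta_j$ by the slowly growing factor $V_j^{\delta}$; the delicate balance is that this boost must be large enough to force $V_j^{\delta/p}\to\infty$ yet small enough to keep $\sum_j v_j/V_j^{(1-\delta)q/p}$ summable, and Lemma~\ref{LE} is exactly what certifies that any $\delta\in(0,1-p/q)$ threads this needle.
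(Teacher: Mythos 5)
Your proposal is correct and follows essentially the same route as the paper's own proof: the same heights $\theta_j=V_j^{\delta}/\Psi(2^{-j})^p$ (the paper writes $S_j^{L}/\Psi(2^{-j})^p$ with $0<L<1-p/q$), the same occupation fractions $\tau_j=v_j/V_j$, the same appeal to Lemma~\ref{LE:TECH} for the rearrangement and to Lemma~\ref{LE} for both the divergence of $\sum_j\tau_j$ and the summability of $v_j/V_j^{(1-\delta)q/p}$, and the same reduction of the case $q=\infty$ to \cite[Lemma~3.8]{Brasseur}. No gaps; only the notation differs.
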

\begin{remark}
By ``usual modification if $q=\infty$" we mean that, when $q=\infty$, the quasi-norm $\left\|\cdot\right\|_{\ell^q(\N)}=(\sum_{j\geq0}\left|\cdot\right|^q)^{1/q}$ is to be replaced by the $\sup$-norm $\left\|\cdot\right\|_{\ell^\infty(\N)}=\sup_{j\geq0}\left|\cdot\right|$.
\end{remark}
\begin{proof}
Since we already settled the case $q=\infty$ in \cite[Lemma 3.8]{Brasseur} (note that the result there is stated for admissible functions $\Psi$ rather than merely positive ones, but the proof for $q=\infty$ makes no use of that assumption, and so adapts to our setting without difficulty), we only have to consider the case $q<\infty$. To do so, we will follow the same kind of construction as we used in \cite{Brasseur} and use Lemma~\ref{LE} to get rid of the unnecessary assumptions we had made. But first, we need to introduce some notation. For $j\in\N$ and $m\geq1$, we set
$$ \Gamma_{j,m}=\frac{\Psi(2^{-j})^\varkappa}{S_j^m},\quad S_j=\sum_{k=1}^j\Psi(2^{-k})^\varkappa. $$
Also, for any $j\in\N$, we let
$$ T_j=\{2^j,\cdots,2^{j+1}-1\}, $$
and
$$ 0<L<1-\frac{p}{q}. $$
(Note that $L$ is well-defined because $p<q$.)
%=\frac{\Psi(2^{-j})^\varkappa}{(\Psi(2^{-1})^\varkappa+\Psi(2^{-2})^\varkappa+\cdots+\Psi(2^{-j})^\varkappa)^m}. $$
Let $(\Lambda_k)_{k\in\N}$ be such that $\Lambda_0=\Lambda_1=0$ and such that, for any $j\geq1$, the $\lfloor2^j\Gamma_{j,1}\rfloor$ first terms of the sequence $(\Lambda_k)_{k\in\N}$ on the discrete dyadic interval $T_j$ have value $S_j^{L}/\Psi(2^{-j})^{p}$ and the remaining terms on $T_j$ are all equal to zero.

Then, for any $j\geq1$, we have
$$ \frac{1}{2^j}\sum_{k\in T_j}\Lambda_k=\frac{1}{2^j}\left(\frac{S_j^{L}}{\Psi(2^{-j})^{p}}+\cdots+\frac{S_j^{L}}{\Psi(2^{-j})^{p}}+0+\cdots+0\right)=\frac{\lfloor2^j\Gamma_{j,1}\rfloor S_j^{L}}{2^j\Psi(2^{-j})^{p}}\leq \frac{\Gamma_{j,1}S_j^{L}}{\Psi(2^{-j})^p}. $$
On the other hand, since $\frac{q}{p}(\varkappa-p)=\varkappa$, we have
$$ \bigg(\frac{\Gamma_{j,1}S_j^{L}}{\Psi(2^{-j})^p}\bigg)^\frac{q}{p}=\bigg(\frac{\Psi(2^{-j})^{\varkappa-p}}{S_j^{1-L}}\bigg)^\frac{q}{p}=\Gamma_{j,\frac{q}{p}(1-L)}, $$
and since $\frac{q}{p}(1-L)>1$, it follows from Lemma~\ref{LE} that $(\Gamma_{j,\frac{q}{p}(1-L)})_{j\in\N}$ is summable. Hence
\begin{align}
\bigg(\sum_{j=0}^\infty\bigg(\frac{1}{2^j}\sum_{k\in T_j}\Lambda_k\bigg)^{\frac{q}{p}}\bigg)^{\frac{1}{q}}<\infty. \label{partie:cv}
\end{align}
Now, by Lemma~\ref{LE:TECH}, we can rearrange $(\Lambda_j)_{j\in\N}$ so to obtain a sequence $(\Lambda_j^*)_{j\in\N}$ satisfying \eqref{partie:cv} (with $\Lambda_j^*$ in place of $\Lambda_j$) and such that, for any $x\in[1,2)$, there exists a countably infinite set $J_x\subset\N$ for which we have
$$ \Lambda_{\lfloor2^jx\rfloor}^*=\frac{1}{\Psi(2^{-j})^p}\bigg(\sum_{k=1}^j\Psi(2^{-k})^\varkappa\bigg)^L \quad\text{for all }j\in J_x. $$
%(Remember that $\Gamma_{j,1}$ has a divergent series, as follows from Lemma~\ref{LE}.)
The conclusion now follows by setting
$$
\lambda_{j,k}=\left\{
\begin{array}{cl}
2^{-\frac{j}{p}}(\Lambda_j^*)^\frac{1}{p} & \text{if }k\in T_j, \\
0 & \text{otherwise.}
\end{array}
\right.
$$
Indeed, on the one hand, we have
$$ \bigg(\sum_{j=0}^\infty\bigg(\sum_{k=0}^\infty\lambda_{j,k}^p\bigg)^{\frac{q}{p}}\bigg)^\frac{1}{q}=\bigg(\sum_{j=0}^\infty\bigg(\frac{1}{2^j}\sum_{k\in T_j}\Lambda_k^*\bigg)^{\frac{q}{p}}\bigg)^{\frac{1}{q}}<\infty, $$
while, on the other hand, we have
$$ \sup_{j\in\N}\, 2^{\frac{j}{p}}\lambda_{j,\lfloor 2^jx\rfloor}\Psi(2^{-j})=\sup_{j\in\N}\, (\Lambda_{\lfloor 2^jx\rfloor}^*)^\frac{1}{p}\Psi(2^{-j})\geq \sup_{j\in J_x} \bigg(\sum_{k=1}^j\Psi(2^{-k})^\varkappa\bigg)^{\frac{L}{p}}=\infty, $$
for all $x\in[1,2)$. This thereby completes the proof.
\end{proof}
\begin{remark}
Observe that $\Psi$ is not required to be slowly varying.
\end{remark}

\section{Proof of Theorem~\ref{TH:MAIN}}

The proof of Theorem~\ref{TH:MAIN} follows exactly the same structure as that of \cite[Theorem 1.7]{Brasseur} but with Lemma~\ref{LE:main} instead of \cite[Lemma 3.8]{Brasseur}. For the convenience of the reader, we outline the main steps of the construction. Let $(\lambda_{j,k})_{j,k\in\N}$ be any sequence satisfying the conclusion of Lemma~\ref{LE:main}. Let $M\in\N$ be such that $M>s$. Set $C_M=2(M+2)$ and, for all $j,k\in\N$, let
$$ m_{j,k}=(C_M2^jj,\cdots,C_M2^jj,k)\in\N^N. $$
Set $u(t)=e^{-1/t^2}\mathds{1}_{(0,\infty)}(t)$ and $v(t)=u(1+t)u(1-t)$. In addition, set
$$ \psi(x)=\prod_{j=1}^N\frac{1}{2}\psi_0\left(\frac{x_j}{2}\right) \,\, \text{ where } \,\, \psi_0(t)=\frac{v(t)}{v(t-1)+v(t)+v(t+1)}. $$
Define
$$ f(x)=\sum_{j,k\in\N}\lambda_{j,k}\hspace{0.1em}2^{-j(s-\frac{N}{p})}\psi(2^jx-m_{j,k}). $$
Then, arguing as in (the proof of) \cite[Theorem 1.7]{Brasseur}, we have
$$ \|f\|_{B_{p,q}^s(\R^N)}\lesssim \bigg(\sum_{j=0}^\infty\bigg(\sum_{k=0}^\infty\lambda_{j,k}^p\bigg)^{\frac{q}{p}}\bigg)^\frac{1}{q}<\infty, $$
so that $f\in B_{p,q}^s(\R^N)$.
(This uses only the quarkonial/subatomic decomposition of $B_{p,q}^s(\R^N)$, see \cite{Trieb}.) On the other hand, we have
$$ \|f(\cdot,y)\|_{B_{p,\infty}^{(s,\Psi)}(\R^{N-1})}\gtrsim \sup_{j\in\N}\, 2^{\frac{j}{p}}\lambda_{j,\lfloor 2^jy\rfloor}\Psi(2^{-j})=\infty \quad\text{for a.e. }y\in[1,2], $$
so that $f(\cdot,y)\notin B_{p,\infty}^{(s,\Psi)}(\R^{N-1})$ for a.e. $y\in [1,2]$.
(This uses only the representation of $B_{p,\infty}^{(s,\Psi)}(\R^{N-1})$ via differences, as in Definition~\ref{G:BESOV}.)
%$$ \|f(\cdot,y)\|_{B_{p,q}^{(s,\Psi)}(\R)}\gtrsim \bigg(\sum_{j=0}^\infty 2^{j\frac{q}{p}}\lambda_{j,\lfloor 2^jy\rfloor}^q\Psi(2^{-j})^q\bigg)^{\frac{1}{q}}=\infty \quad\text{for a.e. }y\in[1,2], $$
%so that $f(\cdot,y)\notin B_{p,q}^{(s,\Psi)}(\R)$ for a.e. $y\in [1,2]$.
%(This uses only the representation of $B_{p,q}^{(s,\Psi)}(\R)$ via differences, as in Definition~\ref{G:BESOV}.)
This can be extended to a.e. $y\in\R$ (rather than ``a.e. $y\in[1,2]$") and to any dimension $d\in\N$ with $1\leq d<N$ using the same arguments as in \cite{Brasseur}.

\vspace{2mm}

\end{document}